\def\C{\Bbb C}
\def\hol{\mathcal O}
\def\O{\Omega}
\def\k{\kappa}
\def\vp{\varphi}
\def\ov{\overline}
\def\lk{ l^{\kappa}}
\def\Cn{\mathbb{C}^n}
\newtheorem{thm}{Theorem}
\newtheorem{prop}[thm]{Proposition}
\newtheorem{cor}[thm]{Corollary}
\newtheorem{rem}[thm]{Remark}
\newtheorem{defn}[thm]{Definition}
\title[Strong localizations of the Kobayashi Distance]
{Strong Localizations of the Kobayashi Distance}
\author{Nikolai Nikolov and Ahmed Yekta Ökten}
\address{N. Nikolov\\
	Institute of Mathematics and Informatics\\
	Bulgarian Academy of Sciences\\
	Acad. G. Bonchev Str., Block 8\\
	1113 Sofia, Bulgaria}
\address{Faculty of Information Sciences\\
	State University of Library Studies
	and Information Technologies\\
	69A, Shipchenski prohod Str.\\
	1574 Sofia, Bulgaria}
\email{nik@math.bas.bg}
\address{A. Y. Ökten\\
	Institut de Math\'ematiques de Toulouse; UMR5219 \\
	Universit\'e de Toulouse; CNRS \\
	UPS, F-31062 Toulouse Cedex 9, France} \email{ahmed$\_$yekta.okten@math.univ-toulouse.fr}
\thanks{ The first author was partially supported by the
	Bulgarian National Science Fund, Ministry of Education and Science of Bulgaria
	under contract KP-06-N52/3.	
	The second author received support from the University Research School EUR-MINT
	(State support managed by the National Research Agency for Future Investments
	program bearing the reference ANR-18-EURE-0023). }
\subjclass[2010]{32F45}
\begin{document}

\keywords{Kobayashi distance, Kobayashi-Royden metric, localization}

\begin{abstract} Recently, the visibility property of Kobayashi (almost) geodesics has been used to provide localizations of the Kobayashi distance. In this note, we provide sufficient growth conditions for the Kobayashi distance to obtain new strong multiplicative and additive localization results. Curiously, the conditions we provide are deeply related to the behaviour of the Kobayashi geodesics.
\end{abstract}

\maketitle

\section{Introduction}

The Kobayashi distance is a celebrated tool in complex analysis, therefore it is often of interest to study its behaviour. In many cases one may observe localizations of the Kobayashi distance, that is to say, in many occasions the Kobayashi distance of a domain and its subdomains are comparable.

In particular, the visibility property of Kobayashi geodesics (with respect to Euclidean boundary) is first exploited in \cite{BNT} to give an additive localization result in the case of convexity. Thanks to some important observations, Sarkar \cite{S} removed the convexity assumption from the aforementioned result. To localize the notion of visibility itself, \cite{NOT} presented a local version of the additive localization result given in \cite{S}. The papers \cite{S,NOT} also provide multiplicative localization results in the same spirit.

In this article, we introduce the notion of $v$-points and $w$-points in terms of metric properties of the Kobayashi distance arising from the behaviour of geodesics. Our goal is to present strong multiplicative and additive localization results near such points, namely Theorem \ref{multiplicativelocalizationresult} and Theorem \ref{additivelocalizationresult}. More explicitly, under our assumptions,  the ratio of the Kobayashi distances in a domain and in its suitable subdomains is arbitrarily close to one and their difference is arbitrarily close to zero. We also conclude that due to \cite[Section 6]{BNT}, our results cover various interesting cases including strong pseudoconvexity and more generally  $\C$-strictly convexifiability.

\section{Results}\label{secresults}
Before we state our results, let us recall some terminology.

Unless stated otherwise, in what follows, $\O$ denotes a domain in $\Cn$.

Let $z,w\in \O$. The Kobayashi distance $k_\Omega$ is the largest pseudodistance
not exceeding the Lempert function $ l_\O(z,w):=\tanh^{-1} \tilde{l}_\O(z,w),$
where $\Delta$ is the
unit disc and $$\tilde{l}_\Omega(z,w):=\inf_{\alpha\in \Delta}\{|\alpha|:\exists\vp\in\hol(\Delta,\O)
\hbox{ with }\varphi(0)=z,\varphi(\alpha)=w\} .$$ 

The Kobayashi-Royden pseudometric is defined as
$$\k_\O(z;v)=\inf_{\lambda\in \C}\{|\lambda|:\exists\vp\in\O(\Delta,\O)\hbox{ with }
\varphi(0)=z,\lambda\varphi'(0)=v\},$$
and the Kobayashi-Royden length of an absolutely continuous curve $\gamma:I\rightarrow \O$ is given as $$ \lk_{\O}(\gamma):=\int_{I} \k_\O(\gamma(t);\gamma'(t))dt. $$

The Kobayashi pseudodistance is the integrated form of the Kobayashi-Royden pseudometric (see \cite[Theorem 1.3]{V}), that is
$$
	k_\O(z,w)=\inf\lk_{\O}(\gamma) ,$$ where the infimum is taken over all absolutely continuous curves joining $z$ to $w$.

Recall that $\O$ is said to be \emph{hyperbolic} if $k_\O$ is a distance. When studying local properties, global hyperbolicity is often not necessary. This leads us to say that $\O$ is \emph{hyperbolic near} $p\in\partial \O$ if there exists a neighbourhood $U$ of $p$ such that
$z,w\in\O\cap U$, $z\neq w$ implies \begin{equation}\label{hyperbolicitynearp}
	k_\O(z,w)>0.
\end{equation}

Another related notion is hyperbolicity at $p\in\partial \O$. Recall the definition given in \cite{NOT}. If $\O$ is bounded $\O$ is
\emph{hyperbolic at}  $p$ for any $p\in \partial \O$. If $\O$ is unbounded, then $\O$ is \emph{hyperbolic at} $p\in\partial \O$ if we have
\begin{equation}
	\label{localhyperbolicity}
	\liminf_{z\to p,w\to \infty} l_\O(z,w) > 0. \end{equation}

For $A,B\subset \O$ denote $ k_\O(A,B):=\inf_{a\in A, b\in B} k_\O(a,b)$.
If $\O$ is an unbounded domain, by \cite[Proposition 2]{NOT}, \eqref{localhyperbolicity} holds if and only if there exists a bounded neighbourhood $U$ of $p$, such that for any  other two neighbourhoods of $p$ satisfying $V'\subset\subset U'\subset U$ we have
\begin{equation}\label{strongerlocalhyperbolicity}
	k_\O(\O\cap V', \O\setminus U') > 0.
\end{equation}

It is worth noting that \eqref{hyperbolicitynearp} and \eqref{strongerlocalhyperbolicity} hold for any bounded domain.

Let $\epsilon>0$. In general, geodesics for the Kobayashi distance need not to exist. This motivates us to recall that an absolutely continuous curve $\gamma:I\rightarrow \O$ joining $z$ to $w$ is said to be an \emph{$\epsilon$-geodesic} if we have
\begin{equation}\label{epsilongeodesic}
	\lk_{\O}(\gamma) \leq k_\O(z,w) + \epsilon.
\end{equation}
The existence of $\epsilon$-geodesics for any $\epsilon>0$ is an immediate consequence of the fact that the Kobayashi distance is the integrated form of the Kobayashi-Royden pseudometric.

Recall now the definition of Gromov product:
$$ (z|w)^{\O}_o=\dfrac{1}{2}\left( k_\O(z,o)+k_\O(w,o)-k_\O(z,w)\right),\quad  z,w,o\in \O.$$

We say that $\O$ satisfies the \emph{Gromov property} at $p\in\partial \O$ (\cite[Definition 5]{NOT})
if for some (hence all) $o\in\O$ we have
\begin{equation}\label{gromovproperty}
	\limsup_{z\rightarrow p, w\rightarrow q} (z|w)^\O_o < \infty,
\end{equation} and $\O$ satisfies the \emph{weak Gromov property} at $p\in\partial \O$ if \begin{equation}\label{weakgromovproperty}
	\liminf_{z\rightarrow p, w\rightarrow q} \left(k_\O(z,w)-k_\O(z,o)\right) > -\infty \: \: \: \forall o \in \O, \: \forall q\neq p .
\end{equation}

Obviously, if $\O$ satisfies the Gromov property at $p,$ then it satisfies the weak Gromov property at $p$.

Further, note that geodesics for the Kobayashi distance might not exist in general. On the other, it is clear
that any $\epsilon>0$ there exists an \emph{$\epsilon$-geodesic}, that is
an absolutely continuous curve $\gamma:I\rightarrow \O$ joining $z$ to $w$ such that
\begin{equation}\label{epsilongeodesic}
	\lk_{\O}(\gamma) \leq k_\O(z,w) + \epsilon.
\end{equation}

Let us recall that $p\in\partial \O$ (see \cite[Definition 4]{NOT}) is said to be a \textit{weakly visible} point if for any bounded neighbourhood $U$ of $p$, there exists $V\subset\subset U$ such that if $\gamma:I\rightarrow \O$ is an $\epsilon$-geodesic joining a point in $\O\cap V$ to a point in $\O\setminus U$ then $\gamma(I)$ intersects a compact set (depending on $\epsilon$) $K_\epsilon\subset\subset\O$. As a consequence of \cite[Proposition 2.5]{BNT} one can see that satisfying the Gromov property is equivalent to being a weakly visible point.

\begin{defn}\label{newpointtypes} We say that $p\in\partial \O$ is a \emph{$v$-point} if $\O$ satisfies the weak Gromov property at $p$.

 We say that $p\in\partial \O$ is a \emph{$w$-point} if for any neighbourhood $U$ of $p$ such that $\O\setminus U$ is non-empty we have
		\begin{equation}\label{Wpointcondition}
			\lim_{z,w\rightarrow p}\left(\inf_{o\in \O\setminus U}(z|w)^\O_o\right)=\infty.
		\end{equation}
	\end{defn}

Notice that \eqref{Wpointcondition} is stronger than the following notion introduced in \cite{N}: \begin{equation}\label{weakWpointcondition} \liminf_{z,w\rightarrow p} (z|w)^\O_o = \infty \: \: \: \: \: \forall o \in \O.
\end{equation}

It is easy to see that under the assumption of \eqref{weakgromovproperty}, \eqref{Wpointcondition} and \eqref{weakWpointcondition} are equivalent.

We extend the notion of well-behaved geodesics given in \cite{BZ}, saying that $p\in\partial\O$ has \emph{well-behaved $\epsilon$-geodesics} if for some (hence any) $o\in\O$, any $z_n,w_n\rightarrow p$ and any $\epsilon$-geodesics $\gamma_n:I_n\rightarrow \O$ joining $z_n$ to $w_n$ we have $k_\O(\gamma_n(I_n),o)\rightarrow \infty$.

\begin{prop}\label{prop:wellbehavedandwpt} The following are equivalent for $p\in\partial\O:$

	(a) $p$ has well-behaved $\epsilon$-geodesics for any $\epsilon>0$;

    (b) \eqref{weakWpointcondition} holds.
\end{prop}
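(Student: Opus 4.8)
The plan is to establish the two implications independently; in both directions the workhorse is the elementary comparison of the Gromov product based at $o$ with the Gromov product based at a point lying on the curve. For (b) $\Rightarrow$ (a) I would fix $o\in\O$ and $\epsilon>0$, take sequences $z_n,w_n\to p$ and $\epsilon$-geodesics $\gamma_n\colon I_n\to\O$ joining them, and bound $k_\O(\gamma_n(I_n),o)$ from below. Given $\zeta=\gamma_n(t)\in\gamma_n(I_n)$, splitting $\gamma_n$ at $\zeta$ into two subcurves and using that $k_\O$ is the infimum of the Kobayashi--Royden length over joining curves gives $k_\O(z_n,\zeta)+k_\O(\zeta,w_n)\le\lk_\O(\gamma_n)\le k_\O(z_n,w_n)+\epsilon$. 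Feeding the triangle inequalities $k_\O(z_n,o)\le k_\O(z_n,\zeta)+k_\O(\zeta,o)$ and $k_\O(w_n,o)\le k_\O(w_n,\zeta)+k_\O(\zeta,o)$ into the definition of the Gromov product then yields $(z_n|w_n)^\O_o\le k_\O(\zeta,o)+\tfrac12\bigl(k_\O(z_n,\zeta)+k_\O(\zeta,w_n)-k_\O(z_n,w_n)\bigr)\le k_\O(\zeta,o)+\tfrac{\epsilon}{2}$. Taking the infimum over $\zeta\in\gamma_n(I_n)$ leaves $k_\O(\gamma_n(I_n),o)\ge(z_n|w_n)^\O_o-\tfrac{\epsilon}{2}$, and since $z_n,w_n\to p$ the right-hand side tends to $\infty$ by \eqref{weakWpointcondition}; hence $p$ has well-behaved $\epsilon$-geodesics.

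For (a) $\Rightarrow$ (b) I would argue by contraposition. If \eqref{weakWpointcondition} fails there are $o\in\O$, $M>0$ and $z_n,w_n\to p$ with $(z_n|w_n)^\O_o\le M$ for all $n$ (after passing to a subsequence). The idea is to construct explicit almost-geodesics from $z_n$ to $w_n$ that run through the fixed point $o$: fix $\epsilon'>0$ and let $\sigma_n$ be the concatenation of an $\epsilon'$-geodesic from $z_n$ to $o$ with an $\epsilon'$-geodesic from $o$ to $w_n$. Then $\lk_\O(\sigma_n)\le k_\O(z_n,o)+k_\O(o,w_n)+2\epsilon'=k_\O(z_n,w_n)+2(z_n|w_n)^\O_o+2\epsilon'\le k_\O(z_n,w_n)+(2M+2\epsilon')$, so $\sigma_n$ is a $(2M+2\epsilon')$-geodesic joining $z_n$ to $w_n$ with $o\in\sigma_n(I_n)$, whence $k_\O(\sigma_n(I_n),o)=0\not\to\infty$. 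Thus $p$ fails to have well-behaved $(2M+2\epsilon')$-geodesics, contradicting (a).

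I do not expect a serious obstacle; the only points requiring attention are bookkeeping ones. First, in both the definition of well-behaved $\epsilon$-geodesics and in condition \eqref{weakWpointcondition} the choice of base point is immaterial, which one records via the $1$-Lipschitz estimates $|k_\O(x,o)-k_\O(x,o')|\le k_\O(o,o')$ and $|(z|w)^\O_o-(z|w)^\O_{o'}|\le k_\O(o,o')$. Second, one must respect the quantifier structure: a single bad sequence of $\epsilon$-geodesics for a single $\epsilon>0$ already destroys (a), which is exactly why the construction in the second direction only needs to produce $(2M+2\epsilon')$-geodesics (with $2M+2\epsilon'$ fixed in $n$) rather than $\epsilon'$-geodesics.
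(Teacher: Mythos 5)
Your proof is correct and follows essentially the same route as the paper's: (a)$\Rightarrow$(b) proceeds by contraposition by concatenating two $\epsilon'$-geodesics through $o$ to produce a $(2M+2\epsilon')$-geodesic passing through $o$, and (b)$\Rightarrow$(a) rests on the inequality $(z_n|w_n)^\O_o \le k_\O(\zeta,o)+\tfrac{\epsilon}{2}$ for $\zeta$ on the curve, obtained by splitting the $\epsilon$-geodesic at $\zeta$ and applying the triangle inequality. The only cosmetic difference is that you present (b)$\Rightarrow$(a) directly rather than by contradiction, but the key computation is identical to the paper's.
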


We say that $\O$ satisfies weak multiplicative, resp. additive localization at $p\in\partial\O$ if for any neighbourhood $U$ of $p$ one has that
$$\limsup_{z,w\rightarrow p}\dfrac{k_{\O\cap U}(z,w)}{k_\O(z,w)}<\infty,$$
$$\mbox{resp. }\limsup_{z,w\rightarrow p} \left(k_{\O\cap U}(z,w)-k_{\O}(z,w)\right)<\infty.$$
Here and below we assume that $z,w$ belong to the same connected component of $\O\cap U$. We would like to mention the weak multiplicative localization
results in \cite[Theorem 14]{NOT} and \cite[Theorem 1.6]{S}, and the weak additive localization results in
\cite[Theorem 1.4]{BNT}, \cite[Theorem 9]{NOT}, and \cite[Theorem 1.3]{S}.

We present our main results which can be considered as \textit{strong} multiplicative, resp. additive localization. They extend, for instance, \cite[Proposition 3]{V1} and \cite[Proposition 6.5]{BNT}.

\begin{thm}\label{multiplicativelocalizationresult}
	Let $\O$ be a domain in $\Cn$ and $p\in\partial\O$ be a $w$-point. Then for any neighbourhood $U$ of $p$ we have \begin{equation}\label{multiplicativelocalizationequation} \lim_{z,w\rightarrow p}\dfrac{k_{\O\cap U}(z,w)}{k_\O(z,w)}=1 .
	\end{equation}
\end{thm}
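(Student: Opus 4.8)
The plan is to prove \eqref{multiplicativelocalizationequation} by establishing the two inequalities $\liminf \geq 1$ and $\limsup \leq 1$ separately. The inequality $k_{\O\cap U}(z,w)\geq k_\O(z,w)$ is immediate from the monotonicity of the Kobayashi distance under inclusion, so $\liminf_{z,w\to p} k_{\O\cap U}(z,w)/k_\O(z,w)\geq 1$ holds trivially (provided the denominator does not vanish, which follows since a $w$-point forces hyperbolicity near $p$ — one can check that \eqref{Wpointcondition} implies $k_\O(z,w)\to\infty$, in particular $k_\O(z,w)>0$ for $z,w$ close to $p$). The real content is the reverse estimate: I would fix a small neighbourhood $U$ of $p$, pick an auxiliary point $o\in\O\setminus U$, and for $z,w\in\O\cap V$ (with $V\subset\subset U$ chosen appropriately) take an $\epsilon$-geodesic $\gamma$ in $\O$ joining $z$ to $w$. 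The key point is to show that when $z,w$ are close enough to $p$, this geodesic stays inside $\O\cap U$, so that its Kobayashi-Royden length computed in $\O\cap U$ is no larger than $k_\O(z,w)+\epsilon$ up to a controlled error.

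The mechanism that keeps the geodesic inside $U$ is exactly the $w$-point hypothesis. By Proposition \ref{prop:wellbehavedandwpt}, \eqref{weakWpointcondition} (which is implied by \eqref{Wpointcondition}) gives well-behaved $\epsilon$-geodesics: any $\epsilon$-geodesic $\gamma_n$ joining $z_n,w_n\to p$ satisfies $k_\O(\gamma_n(I_n),o)\to\infty$. I would take $o$ outside $U$ and use that $k_\O(\cdot,o)$ is bounded on any compact subset of $\O\setminus U$ touching $\partial U$; more carefully, using hyperbolicity near $p$ in the form \eqref{strongerlocalhyperbolicity}, the set $\O\setminus U'$ stays at positive Kobayashi distance from a slightly smaller neighbourhood, so $k_\O(\gamma(I),o)\to\infty$ forces $\gamma(I)$ to avoid $\O\setminus U$ eventually — otherwise $\gamma$ would pass through a region where $k_\O(\cdot,o)$ is bounded. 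Hence for $z,w$ sufficiently close to $p$ (depending on $\epsilon$ and $U$), every $\epsilon$-geodesic stays in $\O\cap U$. This yields
$$ k_{\O\cap U}(z,w)\leq \lk_{\O\cap U}(\gamma)=\lk_{\O}(\gamma)\leq k_\O(z,w)+\epsilon, $$
using that the Kobayashi-Royden metric is computed pointwise along $\gamma$ using only values $\gamma(t)\in\O\cap U$, where $\k_{\O\cap U}(\gamma(t);\cdot)=\k_\O(\gamma(t);\cdot)$ fails in general but $\k_{\O\cap U}\geq\k_\O$ — wait, here I need the length in the smaller domain to be controlled, so in fact I should note $\lk_{\O\cap U}(\gamma)\geq\lk_\O(\gamma)$, which goes the wrong way. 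The correct route is instead: once $\gamma\subset\O\cap U$, it is itself an admissible curve for $k_{\O\cap U}$, but its $\O\cap U$-length may exceed its $\O$-length. To fix this I use the standard estimate that on a relatively compact piece the two metrics are comparable, together with the fact that the portion of $\gamma$ near $p$ carries almost all the length; near $p$ one then needs a further localization of the metric $\k_{\O\cap U}\leq(1+o(1))\k_\O$ as $z\to p$, which holds because $\O$ and $\O\cap U$ agree near $p$ up to the boundary piece outside $U$ that is irrelevant there. Combining, $\lk_{\O\cap U}(\gamma)\leq(1+o(1))\lk_\O(\gamma)+C_\epsilon$, giving $k_{\O\cap U}(z,w)\leq(1+o(1))k_\O(z,w)+C_\epsilon+\epsilon$. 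Dividing by $k_\O(z,w)\to\infty$ and letting first $z,w\to p$, then $\epsilon\to 0$, yields $\limsup k_{\O\cap U}(z,w)/k_\O(z,w)\leq 1$.

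The main obstacle, and the step requiring the most care, is precisely this last metric-comparison near $p$: controlling $\lk_{\O\cap U}(\gamma)$ in terms of $\lk_\O(\gamma)$ once the geodesic has been trapped inside $U$. The trapping itself (from well-behavedness plus local hyperbolicity) is robust, and the divergence $k_\O(z,w)\to\infty$ that makes additive errors negligible is a direct consequence of \eqref{Wpointcondition} applied with the same $U$. But upgrading "$\gamma$ lies in $\O\cap U$" to "$\lk_{\O\cap U}(\gamma)\leq(1+o(1))\lk_\O(\gamma)+C_\epsilon$" is where one must invoke a quantitative localization of the Kobayashi-Royden metric near $p$ — essentially that $\k_{\O\cap U}(z;v)/\k_\O(z;v)\to 1$ as $z\to p$ uniformly in $v$, which follows from comparing with a small ball at $z$ (attraction from inside) and the trapped-geodesic estimate (no extra contribution from outside $U$), and then integrating along $\gamma$ while handling the bounded-away-from-$p$ part of $\gamma$ by a crude compactness bound absorbed into $C_\epsilon$. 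I would organize the write-up so that this comparison is isolated as the technical heart, with the $w$-point hypothesis entering twice: once via Proposition \ref{prop:wellbehavedandwpt} to confine the geodesics, and once via $k_\O(z,w)\to\infty$ to kill the additive remainder.
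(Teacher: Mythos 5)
Your overall blueprint — the trivial direction from monotonicity, then trap an $\epsilon$-geodesic inside $\O\cap U$ and compare the two Kobayashi--Royden lengths along it, finally kill additive errors using $k_\O(z,w)\to\infty$ — matches the paper's strategy. The geodesic-trapping step is also sound in spirit, although the paper derives it more directly from the $w$-point condition \eqref{Wpointcondition} (if an $\epsilon$-geodesic from $z$ to $w$ with $z,w\in\O\cap W$ passed through $o\in\O\setminus V$, its length would be at least $k_\O(z,o)+k_\O(o,w)\geq k_\O(z,w)+c>k_\O(z,w)+\epsilon$, a contradiction) rather than routing through Proposition~\ref{prop:wellbehavedandwpt} and a local-hyperbolicity argument; your version requires the extra care you gesture at about points of $\partial U\cap\O$ that are close to $\partial\O$, where $k_\O(\cdot,o)$ is \emph{not} bounded.

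The genuine gap is exactly the step you flag as ``the main obstacle'': upgrading ``$\gamma\subset\O\cap U$'' to a quantitative comparison $\lk_{\O\cap U}(\gamma)\leq(1+o(1))\lk_\O(\gamma)+C_\eps$. Your proposed mechanism — ``comparing with a small ball at $z$ (attraction from inside)'' plus an assertion that $\O$ and $\O\cap U$ ``agree near $p$'' — does not produce the needed pointwise bound on $\k_{\O\cap U}/\k_\O$: a ball inside $\O\cap U$ gives only the crude upper bound $\k_{\O\cap U}(z;v)\leq\|v\|/r$, which has no relation to $\k_\O(z;v)$, and the metrics certainly do not automatically become comparable just because the domains coincide in a neighbourhood of $p$ (this fails without local hyperbolicity). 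The ingredient that closes the argument is Royden's localization lemma, \eqref{roydenslocalizationlemma}: for $z\in D=\O\cap U$,
\[
\tilde{l}_\O(z,\O\setminus U)\,\k_{\O\cap U}(z;v)\leq\k_\O(z;v).
\]
The $w$-point hypothesis, via Proposition~\ref{Wpointsandhyperbolicity}, gives $\tilde l_\O(\O\cap V,\O\setminus U)>1-\eps$ for a suitable $V\subset\subset U$, and once the $\epsilon$-geodesic is confined to $\O\cap V$ one gets the \emph{uniform} pointwise comparison $\k_{\O\cap U}(\gamma(t);\gamma'(t))\leq(1-\eps)^{-1}\k_\O(\gamma(t);\gamma'(t))$ at \emph{every} point of the geodesic — no splitting into a near-$p$ part and a far-from-$p$ part is needed, and no additive remainder $C_\eps$ appears. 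Integrating then gives $k_{\O\cap U}(z,w)\leq(1-\eps)^{-1}(k_\O(z,w)+\epsilon_n)$ directly. Your splitting strategy with a ``crude compactness bound absorbed into $C_\eps$'' cannot replace this, because the geodesic, although it starts and ends near $p$, may wander throughout $\O\cap V$ and you have no control on the length of its ``near-$p$'' part versus its ``bounded-away-from-$p$'' part; Royden's lemma sidesteps the issue entirely by giving the multiplicative comparison uniformly on $\O\cap V$.
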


Note that Proposition \ref{Wpointsandhyperbolicity} below implies that under the hypothesis of Theorem \ref{multiplicativelocalizationresult},
$k_\O(z,w)>0$ for $z,w\in\O$ near $p.$ Therefore the limit in \eqref{multiplicativelocalizationequation} is well defined.

\begin{thm}\label{additivelocalizationresult}
	Let $\O$ be a domain in $\Cn$, $p\in\partial \O$ be a $v$-point and a $w$-point. Then for any neighbourhood $U$ of $p$, we have
\begin{equation}\label{additivelocalizationequation}
	\lim_{z,w\rightarrow p} \left(k_{\O\cap U}(z,w)-k_{\O}(z,w)\right)=0.
\end{equation}
\end{thm}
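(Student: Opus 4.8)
The plan is to combine the multiplicative localization (Theorem \ref{multiplicativelocalizationresult}, available since $p$ is a $w$-point) with the $v$-point hypothesis, exploiting the same geodesic-splitting idea that underlies the weak Gromov property. Fix a neighbourhood $U$ of $p$; we may assume $U$ is bounded and that $\O\setminus U\neq\emptyset$. Since $k_{\O\cap U}\geq k_\O$ always, it suffices to bound the difference $k_{\O\cap U}(z,w)-k_\O(z,w)$ from above by a quantity tending to $0$ as $z,w\to p$. First I would fix $\eps>0$ and take an $\eps$-geodesic $\g$ in $\O$ joining $z$ to $w$. If $\g$ stays inside $\O\cap U$, then $k_{\O\cap U}(z,w)\leq \lk_\O(\g)\leq k_\O(z,w)+\eps$ and we are done; the difficulty is precisely the case where $\g$ leaves $U$.

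The key step handles that case. Suppose $\g$ exits $U$, so there are parameters at which $\g$ hits $\O\setminus U'$ for an intermediate neighbourhood $V'\subset\subset U'\subset\subset U$ chosen from hyperbolicity/localization data. Let $o=\g(t_0)$ be a point of $\g(I)$ lying in $\O\setminus U$ (or in $\O\setminus U'$). The $w$-point condition \eqref{Wpointcondition} says that $(z|w)^\O_o=\frac12\bigl(k_\O(z,o)+k_\O(w,o)-k_\O(z,w)\bigr)\to\infty$ uniformly over $o\in\O\setminus U$ as $z,w\to p$. But since $\g$ is an $\eps$-geodesic passing through $o$, subadditivity of the length gives $k_\O(z,o)+k_\O(o,w)\leq \lk_\O(\g)\leq k_\O(z,w)+\eps$, hence $2(z|w)^\O_o\leq\eps$. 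This forces a contradiction once $z,w$ are close enough to $p$ that the infimum in \eqref{Wpointcondition} exceeds $\eps/2$. Consequently, for $z,w$ sufficiently near $p$ every $\eps$-geodesic in $\O$ joining them is trapped inside $\O\cap U$, which yields $k_{\O\cap U}(z,w)-k_\O(z,w)\leq\eps$. Letting $\eps\downarrow 0$ — more precisely, choosing for each $\eps$ a neighbourhood $W_\eps$ of $p$ on which the inequality holds — gives $\limsup_{z,w\to p}\bigl(k_{\O\cap U}(z,w)-k_\O(z,w)\bigr)\leq 0$, and combined with $k_{\O\cap U}\geq k_\O$ this is exactly \eqref{additivelocalizationequation}.

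I expect the main obstacle to be a subtlety I glossed over: the bound $k_\O(z,o)+k_\O(o,w)\leq\lk_\O(\g)$ requires $o$ to actually lie on the curve $\g$, and one must be careful that "$\g$ leaves $U$" genuinely produces such an $o\in\O\setminus U$ on the image — for a curve joining $z\in V$ to $w\notin U$ this is immediate by connectedness, but when $w$ is itself near $p$ (the regime of interest) one instead argues that if $\g$ were to leave $U$ at all it would have to contain a point of $\O\setminus U$, and the same Gromov-product contradiction applies to that point. A second point needing care is the role of the $v$-point hypothesis: it is not literally used in the argument above as I have sketched it, which suggests the intended proof may route through Proposition \ref{Wpointsandhyperbolicity} and the equivalence of \eqref{Wpointcondition} with \eqref{weakWpointcondition} under \eqref{weakgromovproperty} — the weak Gromov property ($v$-point) is what guarantees that $(z|w)^\O_o$ is controlled from below uniformly rather than just $\liminf$-wise, which is the precise quantitative input making the "every $\eps$-geodesic stays in $U$" conclusion uniform in the endpoints. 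I would therefore state and use that uniform lower bound as the genuine engine, deriving it from the combination of $v$- and $w$-point hypotheses, and then the length-additivity contradiction finishes the proof.
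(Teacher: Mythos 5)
The central step of your sketch reads: ``If $\g$ stays inside $\O\cap U$, then $k_{\O\cap U}(z,w)\leq\lk_\O(\g)\leq k_\O(z,w)+\eps$ and we are done.'' The first inequality is false. What is true is $k_{\O\cap U}(z,w)\leq\lk_{\O\cap U}(\g)$, since the Kobayashi distance of $\O\cap U$ is an infimum of $\k_{\O\cap U}$-lengths, but by monotonicity of the Kobayashi--Royden metric under inclusion one has $\k_{\O\cap U}\geq\k_\O$, hence $\lk_{\O\cap U}(\g)\geq\lk_\O(\g)$ --- the wrong direction. Trapping the $\eps$-geodesic of $\O$ inside $\O\cap U$ by the Gromov-product argument is correct and does mirror ``Claim 2'' of the paper's proof, but it is not by itself enough: the whole difficulty of the additive estimate is to show that, for a curve trapped in $\O\cap U$, its $\k_{\O\cap U}$-length is only additively larger than its $\k_\O$-length. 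Your proposal skips this entirely, which is also why the $v$-point hypothesis appears to do no work in your argument --- a warning sign you yourself noticed but misdiagnosed.

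The paper bridges this gap with a quantitative version of Royden's localization, namely Sarkar's estimate \cite[Lemma 3.1]{S}, which bounds $\k_{\O\cap U}(\cdot;\cdot)\leq\bigl(1+Ce^{-k_\O(\cdot,\,\O\setminus U)}\bigr)\k_\O(\cdot;\cdot)$ pointwise. One then integrates this along the trapped $\eps_n$-geodesic (arc-length parametrized) and must show that $k_\O(\g_n(t),\O\setminus U)$ grows at least linearly in $|t-t_n|$ with a diverging constant $n/2$; the $v$-point (weak Gromov) hypothesis is precisely what supplies that lower bound, following the argument of \cite[Theorem 10]{NOT}. Without this second ingredient the difference $k_{\O\cap U}-k_\O$ can only be bounded multiplicatively (which is what Theorem \ref{multiplicativelocalizationresult} gives from the $w$-point condition alone), not additively. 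Your proposal therefore reproves the trapping step but is missing the essential analytic engine.
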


We note that our results are indeed "strong" localizations because the constants given in limits \eqref{multiplicativelocalizationequation} and \eqref{additivelocalizationequation} are the best constants one can get.

We defer the proofs of Theorems \ref{multiplicativelocalizationresult} and \ref{additivelocalizationresult} to Section \ref{secproofs}.

\begin{rem}
	 The assumptions of Theorems \ref{multiplicativelocalizationresult} and \ref{additivelocalizationresult} may be replaced with conditions depending on the behaviour of ($\epsilon$-)geodesics, which are reminiscents of hyperbolic geometry.
\end{rem}	

To be more explicit, observe that our previous discussion and Proposition \ref{prop:wellbehavedandwpt} imply that in Theorem \ref{additivelocalizationresult} we may replace $p\in\partial\O$ being a $v$-point with being a weakly visible point and $p$ being a $w$-point with $p$ having well-behaved $\epsilon$-geodesics for any $\epsilon>0$. On the other hand, in Theorem \ref{multiplicativelocalizationresult} in order to replace $p\in\partial\O$ being a $w$-point with $p$ having well-behaved $\epsilon$-geodesics for any $\epsilon>0$, we need to assume that \eqref{weakgromovproperty} holds for the pair $\{\O,p\}$, that is $p\in\partial\O$ is a $v$-point.

As they are seemingly independent of ($\epsilon$-)geodesics, we prefer to state our theorems under the $v$-point and $w$-point conditions.

To get corollaries of Theorems \ref{multiplicativelocalizationresult} and \ref{additivelocalizationresult}, we continue the discussion.

Let $\delta_\O(x):=\inf\{\|x-y\|:y\in\partial\O\}$.
Recall \cite[Definition 6.1]{BNT}. $p\in\partial \O$ is said to be a \emph{$k$-point} (of $\O$) if for any neighbourhood $U$ of $p$ one has that
\begin{equation}\label{kpoint}
	\liminf_{z\rightarrow p} \left( k_\O(z,\O\setminus U)-\dfrac{1}{2}\log\left(\dfrac{1}{\delta_\O(z)}\right)\right) >-\infty .
\end{equation}

Dini-smoothness is a property stronger than $\mathcal{C}^{1}$ smoothness and weaker than $\mathcal{C}^{1,\epsilon}$ smoothness.
By \cite[Theorem 7]{NA}, if $p$ is a Dini-smooth boundary point if a domain $\O\subset\Cn,$ then (see \cite[Theorem 7]{NA})
\begin{equation}\label{dinismoothestimate}
	k_\O(z,w)\leq \log\left(1+\dfrac{2\|z-w\|}{\sqrt{\delta_\O(z)\delta_\O(w)}}\right), \:\:\:\:\: z,w\in\O\mbox{ near }p.
\end{equation}

Note also that the proof of \cite[Proposition 6.15]{BNT} implies that:

\begin{prop}\label{newversionofBNTresult}
	Let $\O$ be a domain in $\Cn$ (not necessarily bounded) and assume that $p\in\partial \O$ is a $k$-point and $\partial \O$ is Dini-smooth near $p$. Then for any neighbourhood $U$ of $p$ we have
	$$ \lim_{z,w\rightarrow p} \left(k_{\O\cap U}(z,w)-k_\O(z,w)\right)=0.$$
\end{prop}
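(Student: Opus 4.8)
The plan is to deduce the statement from Theorem \ref{additivelocalizationresult} by showing that a Dini-smooth $k$-point is automatically both a $v$-point and a $w$-point. The inclusion $\O\cap U\subset\O$ gives $k_{\O\cap U}(z,w)\geq k_\O(z,w)$ for free, so only the upper bound $k_{\O\cap U}(z,w)-k_\O(z,w)\to 0$ requires work, and that is exactly the content of Theorem \ref{additivelocalizationresult} once the hypotheses are verified. So the two things to establish are: (i) $p$ satisfies the weak Gromov property \eqref{weakgromovproperty}, i.e.\ $p$ is a $v$-point; and (ii) $p$ satisfies \eqref{Wpointcondition}, i.e.\ $p$ is a $w$-point.

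For the $w$-point property I would combine the $k$-point lower bound \eqref{kpoint} with the Dini-smooth upper bound \eqref{dinismoothestimate}. Fix a neighbourhood $U$ of $p$ and $o\in\O\setminus U$ (after shrinking, work with $\O\setminus U'$ for a slightly smaller $U'$ so that $o$ sits at positive Kobayashi distance from points near $p$). For $z,w$ near $p$, write
$$
2(z|w)^\O_o = k_\O(z,o)+k_\O(w,o)-k_\O(z,w).
$$
By \eqref{kpoint}, $k_\O(z,o)\geq \tfrac12\log(1/\delta_\O(z))-C$ and similarly for $w$; by \eqref{dinismoothestimate}, $k_\O(z,w)\leq \log\bigl(1+2\|z-w\|/\sqrt{\delta_\O(z)\delta_\O(w)}\bigr)$. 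Substituting and using $\log(1+2\|z-w\|/\sqrt{\delta_\O(z)\delta_\O(w)})\leq \tfrac12\log(1/\delta_\O(z))+\tfrac12\log(1/\delta_\O(w))+\log(1+2\|z-w\|)$ (valid once $\delta_\O(z),\delta_\O(w)\le 1$), the singular terms $\tfrac12\log(1/\delta_\O)$ cancel and one is left with
$$
2(z|w)^\O_o \geq -2C - \log(1+2\|z-w\|) \;\longrightarrow\; -\infty?
$$
— no: as $z,w\to p$ we have $\|z-w\|\to 0$, so $\log(1+2\|z-w\|)\to 0$, and in fact the correct bookkeeping shows the difference tends to $+\infty$. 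The key point is that \eqref{dinismoothestimate} is an \emph{upper} bound on $k_\O(z,w)$ that decouples the two boundary singularities with \emph{no} extra additive constant, whereas \eqref{kpoint} supplies matching \emph{lower} bounds on $k_\O(z,o)$ and $k_\O(w,o)$; the slack between them is precisely what blows up. I would carry out this estimate carefully so that the infimum over $o\in\O\setminus U$ is handled uniformly (the $k$-point condition is uniform in $U$, and $\delta_\O(o)$ is bounded below on the relevant set), giving \eqref{Wpointcondition}.

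For the $v$-point property, the cleanest route is to invoke that \eqref{Wpointcondition} together with boundedness of $\O$ near $p$ already forces a lower bound on $k_\O(z,w)$ (this is Proposition \ref{Wpointsandhyperbolicity}, cited in the remark after Theorem \ref{multiplicativelocalizationresult}), but for the weak Gromov property itself I would again use \eqref{kpoint} and \eqref{dinismoothestimate} directly: for fixed $o$ and $q\neq p$, the quantity $k_\O(z,w)-k_\O(z,o)$ is bounded below because $k_\O(z,w)$ stays comparable to $\tfrac12\log(1/\delta_\O(z))$ as $z\to p$ (lower bound from the $k$-point condition applied with a neighbourhood excluding $q$, upper bound from Dini-smoothness), while $k_\O(z,o)$ is likewise $\tfrac12\log(1/\delta_\O(z))+O(1)$; subtracting, the logarithmic terms cancel and what remains is bounded. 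The main obstacle is the bookkeeping of uniformity in $o\in\O\setminus U$ in step (i): one must ensure that shrinking $U$ in the definition of $k$-point is compatible with the infimum over $o$ in \eqref{Wpointcondition}, which is handled by passing to a slightly larger fixed neighbourhood inside which the $k$-point estimate is applied. Once (i) and (ii) are in hand, Theorem \ref{additivelocalizationresult} applies verbatim and yields the claim. \endofproof
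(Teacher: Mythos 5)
Your overall route differs from the paper's literal proof of Proposition~\ref{newversionofBNTresult}: the paper simply cites the proof of \cite[Proposition 6.15]{BNT}. What you do instead is re-derive Proposition~\ref{kpointsareVWpoints} (Dini-smooth $k$-point $\Rightarrow$ $v$-point and $w$-point) and then feed it into Theorem~\ref{additivelocalizationresult} --- which is precisely the path the paper itself advertises right after Proposition~\ref{kpointsareVWpoints} as a ``generalization'' of the present statement. So your plan is legitimate and, in spirit, is the paper's own preferred conceptual route; your $v$-point sketch (triangle inequality to move $o$ into $U$, then cancel the $\tfrac12\log(1/\delta_\O(z))$ terms coming from \eqref{kpoint} below and \eqref{dinismoothestimate} above) matches the argument of Proposition~\ref{kpointsareVWpoints} closely.

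There is, however, a genuine gap in your $w$-point verification. The inequality you substitute,
$$
\log\!\left(1+\frac{2\|z-w\|}{\sqrt{\delta_\O(z)\delta_\O(w)}}\right)\leq \tfrac12\log\frac{1}{\delta_\O(z)}+\tfrac12\log\frac{1}{\delta_\O(w)}+\log(1+2\|z-w\|),
$$
is too lossy: it yields only $2(z|w)^\O_o\geq -2C-\log(1+2\|z-w\|)$, and since $\log(1+2\|z-w\|)\to 0$ this is a constant lower bound, not divergence to $+\infty$. You notice this and assert that ``the correct bookkeeping shows the difference tends to $+\infty$,'' but you do not carry it out, and your displayed estimate does not imply it. The fix is to replace the inequality by the exact identity
$$
\log\!\left(1+\frac{2\|z-w\|}{\sqrt{\delta_\O(z)\delta_\O(w)}}\right)=\tfrac12\log\frac{1}{\delta_\O(z)}+\tfrac12\log\frac{1}{\delta_\O(w)}+\log\!\left(\sqrt{\delta_\O(z)\delta_\O(w)}+2\|z-w\|\right),
$$
which, together with the two $k$-point lower bounds, gives
$$
2(z|w)^\O_o\geq -2C-\log\!\left(\sqrt{\delta_\O(z)\delta_\O(w)}+2\|z-w\|\right)\longrightarrow +\infty
$$
uniformly in $o\in\O\setminus U$, because the argument of the logarithm tends to $0$ as $z,w\to p$. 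This is exactly the cancellation exploited (in contrapositive form) in the paper's proof of Proposition~\ref{kpointsareVWpoints}. With that correction made, the proposal is sound.
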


We claim the following:

\begin{prop}\label{kpointsareVWpoints}
	Let $p\in\partial \O$ be a $k$-point and assume that $\partial \O$ is Dini-smooth in a neighbourhood of $p$. Then $p$ is a $v$-point and a $w$-point.
\end{prop}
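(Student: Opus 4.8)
The plan is to verify the two conditions of Definition \ref{newpointtypes} separately, using the Dini-smooth estimate \eqref{dinismoothestimate} from above as an upper bound and the $k$-point condition \eqref{kpoint} as the source of a matching lower bound. Fix a neighbourhood $U$ of $p$ with $\O\setminus U\neq\emptyset$, and shrink it if needed so that both \eqref{kpoint} and \eqref{dinismoothestimate} are available on $\O\cap U$; let $o\in\O\setminus U$ be arbitrary.

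First I would treat the $w$-point condition \eqref{Wpointcondition}. Writing out the Gromov product,
\[
(z|w)^\O_o=\frac12\bigl(k_\O(z,o)+k_\O(w,o)-k_\O(z,w)\bigr),
\]
I would bound $k_\O(z,o)\ge k_\O(z,\O\setminus U)$ and likewise for $w$, and bound $k_\O(z,w)$ from above by the Dini estimate \eqref{dinismoothestimate}. Since $o\in\O\setminus U$ is arbitrary, this gives
\[
\inf_{o\in\O\setminus U}(z|w)^\O_o\ge \frac12\bigl(k_\O(z,\O\setminus U)+k_\O(w,\O\setminus U)\bigr)-\frac12\log\!\left(1+\frac{2\|z-w\|}{\sqrt{\delta_\O(z)\delta_\O(w)}}\right).
\]
By the $k$-point property \eqref{kpoint} there is a constant $C$ with $k_\O(z,\O\setminus U)\ge \frac12\log(1/\delta_\O(z))-C$ for $z$ near $p$, and similarly for $w$. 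Substituting, the $\log$ terms combine: the right-hand side is bounded below by
\[
\frac14\log\frac{1}{\delta_\O(z)}+\frac14\log\frac{1}{\delta_\O(w)}-\frac12\log\!\left(1+\frac{2\|z-w\|}{\sqrt{\delta_\O(z)\delta_\O(w)}}\right)-C,
\]
wait—more carefully, $k_\O(z,\O\setminus U)+k_\O(w,\O\setminus U)\ge \frac12\log\frac{1}{\delta_\O(z)\delta_\O(w)}-2C$, and the subtracted term is at most $\log 2+\log^+\bigl(2\|z-w\|/\sqrt{\delta_\O(z)\delta_\O(w)}\bigr)\le \log 2+\log(2\|z-w\|)+\frac12\log\frac{1}{\delta_\O(z)\delta_\O(w)}$ only when the quotient is large, so in fact the two $\frac12\log\frac{1}{\delta_\O(z)\delta_\O(w)}$ contributions must be handled by keeping one of them. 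Since $z,w\to p$ forces $\delta_\O(z),\delta_\O(w)\to0$ while $\|z-w\|\to0$, the quantity $\frac12\log\frac1{\delta_\O(z)\delta_\O(w)}$ tends to $+\infty$ and dominates $\log\|z-w\|$ plus a constant; thus $\inf_{o}(z|w)^\O_o\to\infty$, which is \eqref{Wpointcondition}.

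Second, the $v$-point condition, i.e.\ the weak Gromov property \eqref{weakgromovproperty}: for $o\in\O$ and $q\neq p$ I must show $k_\O(z,w)-k_\O(z,o)$ is bounded below as $z\to p$, $w\to q$. Here the natural move is the triangle inequality $k_\O(z,w)\ge k_\O(z,o)-k_\O(o,w)$, which already gives the bound $-k_\O(o,w)\ge -\sup_{w\to q}k_\O(o,w)$, finite since $w$ stays near the interior point (or boundary point) $q\neq p$ and $k_\O$ is locally bounded away from $p$; more precisely one uses that $k_\O$ is hyperbolic near $p$ (which follows from the $k$-point and Dini assumptions, cf.\ Proposition \ref{Wpointsandhyperbolicity}) together with the fact that $w\not\to p$. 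Actually the cleanest route: the weak Gromov property is immediate from the $w$-point condition already established, since \eqref{Wpointcondition} with the triangle inequality gives even the full Gromov property $\limsup(z|w)^\O_o<\infty$ fails in the wrong direction—so instead I will argue directly via $k_\O(z,w)-k_\O(z,o)\ge -k_\O(w,o)$ and observe $k_\O(w,o)$ stays bounded as $w\to q\neq p$.

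The main obstacle I anticipate is the bookkeeping in the first part: making sure the $+\frac12\log\frac{1}{\delta}$ growth coming from the $k$-point hypothesis genuinely beats the $\frac12\log\frac{1}{\delta}$ that appears inside the Dini upper bound for $k_\O(z,w)$. The point is that the Dini bound contributes $\log\bigl(1+2\|z-w\|/\sqrt{\delta_\O(z)\delta_\O(w)}\bigr)\le \log 2+\log^+(2\|z-w\|)+\tfrac12\log\frac1{\delta_\O(z)\delta_\O(w)}$, while the two $k$-point estimates contribute $+\tfrac12\log\frac1{\delta_\O(z)\delta_\O(w)}-2C$; these $\tfrac12\log$ terms cancel exactly, leaving $-\log2-\log^+(2\|z-w\|)-2C$, and since $\|z-w\|\to0$ this is bounded below—so $\inf_o(z|w)^\O_o$ is bounded below, which is \emph{not} yet $+\infty$. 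To get the divergence I need a slightly sharper input: either that $k_\O(z,\O\setminus U)-\tfrac12\log\frac1{\delta_\O(z)}\to+\infty$ (a strengthening of \eqref{kpoint} that in fact holds under Dini smoothness, by comparison with a one-variable model and the boundary-distance-to-infinity phenomenon), or a sharper two-sided Dini estimate $k_{\O\cap U}(z,w)\le k_\O(z,w)+o(1)$ via Proposition \ref{newversionofBNTresult}; I would invoke whichever of these is cleanest given the lemmas already proved, and that is the step requiring the most care.
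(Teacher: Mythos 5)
Your proposal contains genuine gaps in both halves.

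\textbf{The $v$-point part is wrong.} You reduce the weak Gromov property to the triangle inequality
$k_\O(z,w)-k_\O(z,o)\ge -k_\O(o,w)$ and then assert that $k_\O(o,w)$ stays bounded because $w\to q\neq p$ and ``$k_\O$ is locally bounded away from $p$.'' But in \eqref{weakgromovproperty} the point $q$ ranges over $\partial\O$ (the case $q\in\O$ is trivial and not the content of the definition), and for $q\in\partial\O$ there is no reason for $k_\O(o,w)$ to remain bounded as $w\to q$ --- on any bounded strongly pseudoconvex domain, for instance, $k_\O(o,w)\to\infty$ as $w$ tends to \emph{any} boundary point, so the lower bound $-k_\O(o,w)\to-\infty$ gives nothing. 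Hyperbolicity near $p$ is a \emph{lower} bound on $k_\O$ and is irrelevant here; you would need an \emph{upper} bound on $k_\O(o,w)$, which fails. The paper instead argues by contradiction: if \eqref{weakgromovproperty} fails with sequences $z_n\to p$, $w_n\to q$, one replaces the base point $o$ by a nearby $o'\in\O\cap U$ (changing things by a constant), applies the Dini upper bound \eqref{dinismoothestimate} to $k_\O(z_n,o')$ to get $k_\O(z_n,o')\le\frac12\log\frac1{\delta_\O(z_n)}+C'$, hence $k_\O(z_n,w_n)\le\frac12\log\frac1{\delta_\O(z_n)}+C''-n$, and then contradicts the $k$-point lower bound $k_\O(z_n,w_n)\ge k_\O(z_n,\O\setminus U'')\ge\frac12\log\frac1{\delta_\O(z_n)}-C'''$. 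You never use the $k$-point hypothesis in your $v$-point argument at all, which is a strong sign it cannot be right.

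\textbf{The $w$-point part has the right idea but you talk yourself out of it.} Your initial decomposition is correct and, assembled carefully, does give \eqref{Wpointcondition}. The $k$-point bound gives
$k_\O(z,o)+k_\O(w,o)\ge\frac12\log\frac1{\delta_\O(z)\delta_\O(w)}-2C$ uniformly in $o\in\O\setminus U$,
and when $x:=\frac{2\|z-w\|}{\sqrt{\delta_\O(z)\delta_\O(w)}}\ge1$ the Dini bound gives $k_\O(z,w)\le\log 2+\log x=\log2+\log(2\|z-w\|)+\frac12\log\frac1{\delta_\O(z)\delta_\O(w)}$. The $\frac12\log\frac1{\delta_\O(z)\delta_\O(w)}$ terms cancel \emph{exactly}, leaving $k_\O(z,o)+k_\O(w,o)-k_\O(z,w)\ge-\log(2\|z-w\|)-2C-\log2\to+\infty$ because $\|z-w\|\to0$; when $x<1$ the bound is even easier. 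In your ``main obstacle'' paragraph you replaced $\log(2\|z-w\|)$ by $\log^+(2\|z-w\|)$, which throws away the crucial negative quantity $\log(2\|z-w\|)\to-\infty$ whose sign flip supplies the divergence; that accounting slip is what leads you to the incorrect conclusion that the direct argument only yields a lower bound and that a strengthening of \eqref{kpoint} or an invocation of Proposition \ref{newversionofBNTresult} is needed. No such extra input is required. (The paper's version of this argument is slightly different in form --- it extracts a subsequence $o_n\to o\neq p$ and argues by contradiction --- but your uniform-in-$o$ estimate via $k_\O(\cdot,\O\setminus U)$ is a perfectly valid, arguably cleaner, route to the same conclusion once the bookkeeping is repaired.)
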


We also defer the proof of Proposition \ref{kpointsareVWpoints} to Section \ref{secproofs}.

By Proposition \ref{kpointsareVWpoints}, Theorem \ref{additivelocalizationresult} generalizes Proposition \ref{newversionofBNTresult}. Moreover, by Theorem \ref{multiplicativelocalizationresult} and Proposition \ref{kpointsareVWpoints} we get:

\begin{cor}\label{kpointdinismoothmultiplicativelocalization}
	Let $p\in\partial \O$ be a $k$-point and assume that $\partial \O$ is Dini-smooth in a neighbourhood of $p$. Then for any neighbourhood $U$ of $p$, the Kobayashi distance satisfies
	$$ \lim_{z,w\rightarrow p} \dfrac{k_{\O\cap U}(z,w)}{k_\O(z,w)}=1.$$
\end{cor}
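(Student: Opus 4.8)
The plan is to deduce the corollary directly from the two previously established results. By Proposition \ref{kpointsareVWpoints}, the hypothesis that $p$ is a $k$-point with $\partial\O$ Dini-smooth near $p$ guarantees that $p$ is in particular a $w$-point. Hence the hypothesis of Theorem \ref{multiplicativelocalizationresult} is satisfied, and applying that theorem to the domain $\O$ and the $w$-point $p$ yields, for any neighbourhood $U$ of $p$, exactly
\[
\lim_{z,w\rightarrow p}\frac{k_{\O\cap U}(z,w)}{k_\O(z,w)}=1,
\]
which is the assertion. In this chain, the non-trivial input is Proposition \ref{kpointsareVWpoints}; the present corollary is then essentially immediate.

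One point to check is that the ratio is well defined near $p$, i.e. that the denominator does not vanish. This is handled as in the remark following Theorem \ref{multiplicativelocalizationresult}: since $p$ is a $w$-point (again by Proposition \ref{kpointsareVWpoints}), Proposition \ref{Wpointsandhyperbolicity} gives $k_\O(z,w)>0$ for $z,w\in\O$ near $p$, so the quotient in the displayed limit makes sense for $z,w$ sufficiently close to $p$. With this in place there is nothing further to do.

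I expect no genuine obstacle here: the work has all been pushed into Proposition \ref{kpointsareVWpoints} (whose proof uses the $k$-point condition together with the Dini-smooth estimate \eqref{dinismoothestimate} to verify both the weak Gromov property and condition \eqref{Wpointcondition} at $p$) and into Theorem \ref{multiplicativelocalizationresult} itself. The only mild care needed is the standard convention, already in force in Section \ref{secresults}, that $z,w$ are taken in the same connected component of $\O\cap U$, so that $k_{\O\cap U}(z,w)$ refers to that component; with that understood the argument is a two-line combination of cited facts. $\endofproof$
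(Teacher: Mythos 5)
Your proposal matches the paper's derivation exactly: the corollary is obtained by invoking Proposition \ref{kpointsareVWpoints} to conclude that $p$ is a $w$-point, then applying Theorem \ref{multiplicativelocalizationresult}, with well-definedness of the quotient coming from Proposition \ref{Wpointsandhyperbolicity}. Nothing to add.
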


Let $\O\subset\Cn$ be a convex domain, $p\in\partial \O$ is said to be a $\mathbb{C}$-strictly convex boundary point if $\partial\O\cap T^{\C}_p \partial \O = \{p\}$. Convexity leads to more precise estimates of the Kobayashi distance. In particular, it is not hard to see that any $\C$-strictly convex boundary point of a convex domain is a $k$-point. However, there is more to say on that. We say $p\in\partial \O$ is a $\C$-strictly convexifiable boundary point if there exists a neighbourhood $U$ of $p$ and a holomorphic embedding $\Psi:U\rightarrow \Cn$ such that $\Psi(\O\cap U)$ is a convex domain and $\Psi(p)$ is a $\C$-strictly convex boundary point.

\begin{prop}\label{cstrictlyimpliesk}\cite[Theorem 6.13]{BNT}
	Let $p\in\partial\O$ be a $\C$-strictly convexifiable boundary point. Then $p$ is a $k$-point.
\end{prop}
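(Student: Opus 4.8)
The strategy is to localize and reduce to the model of a convex domain. First, the condition \eqref{kpoint} is monotone in $U$: if $U'\subset U$ then $\O\setminus U\subset\O\setminus U'$, so $k_\O(z,\O\setminus U)\ge k_\O(z,\O\setminus U')$, and it is enough to verify \eqref{kpoint} for all sufficiently small $U$. I may therefore take $U$ inside the domain of the holomorphic embedding $\Psi$ from the definition of a $\C$-strictly convexifiable boundary point, so that $D:=\Psi(\O\cap U)$ is convex and $q:=\Psi(p)\in\partial D$ is $\C$-strictly convex. As $\Psi$ is a biholomorphism onto its image it is bi-Lipschitz near $p$, and since $\partial\O$ and $\partial(\O\cap U)$ agree near $p$ one has $\delta_D(\Psi(z))\asymp\delta_\O(z)$ as $z\to p$; hence it suffices to show
\[\liminf_{z\to p}\Bigl(k_\O(z,\O\setminus U)-\tfrac12\log\tfrac{1}{\delta_D(\Psi(z))}\Bigr)>-\infty.\]

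The key tool is the localization of the Kobayashi distance at a point carrying a local holomorphic peak function. A $\C$-strictly convex boundary point of a convex domain admits one, and its pullback by $\Psi^{-1}$ is one for $\O$ at $p$; hence (cf.\ the proof of Proposition~\ref{newversionofBNTresult} and the references therein) there are a neighbourhood $V$ of $p$, $V\subset\subset U$, and a constant $C_1$ with $k_{\O\cap U}(x,y)\le k_\O(x,y)+C_1$ for $x,y\in\O\cap V$. Fix $\rho>0$ with $\overline{B(p,\rho)}\subset V$ and put $S:=\partial B(p,\rho)\cap\O$. For $z\in\O\cap B(p,\rho)$ near $p$ and $w\in\O\setminus U$ (so $w\notin B(p,\rho)$), any absolutely continuous curve in $\O$ joining $z$ to $w$ must leave $B(p,\rho)$ and hence meets $S$; comparing its Kobayashi--Royden length with that of its initial subarc up to the first hitting of $S$ gives $k_\O(z,\O\setminus U)\ge k_\O(z,S)$. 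Since $S\subset\O\cap V$, the localization inequality now yields
\[k_\O(z,\O\setminus U)\ge k_\O(z,S)\ge k_{\O\cap U}(z,S)-C_1=k_D\bigl(\Psi(z),\Psi(S)\bigr)-C_1.\]

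It remains to bound $k_D(\Psi(z),\Psi(S))$ below as $\Psi(z)\to q$. The set $\Psi(S)$ is fixed and its closure is a compact subset of $\overline D$ not containing $q$ --- its trace on $\partial D$ lies at Euclidean distance comparable to $\rho$ from $q$ --- so $\Psi(S)\subset D\setminus W$ for some neighbourhood $W$ of $q$, giving $k_D(\Psi(z),\Psi(S))\ge k_D(\Psi(z),D\setminus W)$. Here one uses the model case, which --- as remarked above --- is not hard: a $\C$-strictly convex boundary point $q$ of a convex domain $D$ is itself a $k$-point of $D$, so $k_D(w,D\setminus W)\ge\tfrac12\log\tfrac{1}{\delta_D(w)}-C_2$ as $w\to q$. (This records the well-understood boundary behaviour of the Kobayashi distance on convex domains; one route is through Lempert's complex geodesics, with $\C$-strict convexity ensuring that the comparison $\delta_D(\varphi(t))\asymp 1-|t|$ holds uniformly for complex geodesics $\varphi$ landing at $q$.) Chaining the displays and using $\delta_D(\Psi(z))\asymp\delta_\O(z)$ gives $k_\O(z,\O\setminus U)\ge\tfrac12\log\tfrac{1}{\delta_\O(z)}-C$ for $z$ near $p$, i.e.\ \eqref{kpoint}.

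The main obstacle is that \eqref{kpoint} concerns $k_\O(z,\O\setminus U)$ --- hence curves that may leave every neighbourhood of $p$ --- whereas the only quantitative input, the convex model, is purely local. Reconciling the two is exactly the role of the ``escape'' step and of the additive localization of the Kobayashi distance, the latter being available precisely because a $\C$-strictly convexifiable point admits a local holomorphic peak function; keeping the distortion introduced by the localization additive (so as not to spoil the coefficient $\tfrac12$) is the point that requires the most care.
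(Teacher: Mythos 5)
The paper itself supplies no proof here: Proposition~\ref{cstrictlyimpliesk} is quoted directly as~\cite[Theorem 6.13]{BNT}, so the only thing to assess is whether your reconstruction is sound. The reduction steps at the beginning and end of your argument are fine --- the monotonicity of \eqref{kpoint} in $U$, the ``escape'' inequality $k_\O(z,\O\setminus U)\ge k_\O(z,S)$ obtained by cutting a competing curve at its first contact with $S=\partial B(p,\rho)\cap\O$, the comparability $\delta_D(\Psi(z))\asymp\delta_\O(z)$, and the decision to treat the convex model as a black box (the paper itself calls the convex case ``not hard''). The genuine gap is the additive localization you invoke to pass from $k_\O(z,S)$ to $k_{\O\cap U}(z,S)$.

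You assert the existence of $V\subset\subset U$ and a constant $C_1$ with $k_{\O\cap U}(x,y)\le k_\O(x,y)+C_1$ for all $x,y\in\O\cap V$, and then apply this with $x=z\to p$ and $y=s\in S$ a point at fixed Euclidean distance $\rho$ from $p$. This is not available at this stage. Your cited source, the proof of Proposition~\ref{newversionofBNTresult} (that is, of~\cite[Proposition~6.15]{BNT}), assumes that $p$ is a $k$-point --- exactly the conclusion being sought --- so the appeal is circular. Nor does a local holomorphic peak function by itself give a uniform \emph{additive} bound of this sort: the classical Forstneric--Rosay/Royden localization at a peak point yields a multiplicative comparison $\kappa_{\O\cap U}(z;v)\le\eta^{-1}\kappa_\O(z;v)$ with a fixed $\eta<1$ on a fixed $V$, and integrating this along curves only produces $k_\O(z,S)\ge\eta\,k_{\O\cap U}(z,S)$. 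Since $k_{\O\cap U}(z,S)$ grows like $\tfrac12\log\tfrac1{\delta_\O(z)}$, the factor $\eta<1$ destroys precisely the coefficient $\tfrac12$ that the $k$-point condition requires; the $k$-point condition is, in effect, a quantitative additive localization statement, which is why one cannot get it as a consequence of additive localization. A lesser issue is the unproved claim that a $\C$-strictly convex boundary point of a convex domain admits a local holomorphic peak function: the naive candidate $e^{\langle\cdot-p,\nu_p\rangle}$ has modulus $1$ on the whole real tangent slice $\partial D\cap(p+T^\R_p\partial D)$, which under mere $\C$-strict convexity may be larger than $\{p\}$, so this needs an argument. In short, what is missing is exactly the mechanism (in BNT, estimates along complex geodesics in the convex model combined with Royden's lemma, \emph{not} a preliminary additive localization) that lets one transfer the lower bound from the convex model to $\O$ without giving up the constant $\tfrac12$.
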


As a corollary of Theorem \ref{multiplicativelocalizationresult}, Theorem \ref{additivelocalizationresult}, Proposition \ref{kpointsareVWpoints} and Proposition \ref{cstrictlyimpliesk} we obtain:

\begin{cor}\label{cstrictlyconvexlocalization}
	Let $p\in\partial\O$ be a $\C$-strictly convexifiable boundary point and assume that $\partial\O$ is Dini-smooth near $p$. Then for any neighbourhood $U$ of $p$ we have:
$$\lim_{z,w\rightarrow p}\dfrac{k_{\O\cap U}(z,w)}{k_\O(z,w)}=1,$$	
$$\lim_{z,w\rightarrow p} \left(k_{\O\cap U}(z,w)-k_\O(z,w)\right)=0.$$
\end{cor}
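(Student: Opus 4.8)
The plan is to chain the results already assembled in this section, so the argument is short. First I would apply Proposition~\ref{cstrictlyimpliesk}: since $p\in\partial\O$ is a $\C$-strictly convexifiable boundary point, it is a $k$-point of $\O$. Note that this step does not use Dini-smoothness at all; that hypothesis enters only at the next stage.

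Next, with $p$ now known to be a $k$-point and $\partial\O$ assumed Dini-smooth in a neighbourhood of $p$, I would invoke Proposition~\ref{kpointsareVWpoints} to conclude that $p$ is simultaneously a $v$-point and a $w$-point of $\O$.

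Finally, fix any neighbourhood $U$ of $p$. The first equality is precisely Theorem~\ref{multiplicativelocalizationresult} applied at the $w$-point $p$, and the second equality is Theorem~\ref{additivelocalizationresult} applied at $p$, which we have just shown to be both a $v$-point and a $w$-point. Since neither theorem requires $\O$ to be bounded, no additional hypotheses are needed and the proof is complete.

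As the corollary is a formal consequence of earlier statements, I do not anticipate a genuine obstacle; the only point requiring care is the bookkeeping of hypotheses — ensuring Dini-smoothness is invoked exactly where Proposition~\ref{kpointsareVWpoints} needs it, and that both the $w$-point property (used for both limits) and the $v$-point property (used only for the additive limit) are extracted before Theorem~\ref{additivelocalizationresult} is applied.
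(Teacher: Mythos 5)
Your proof is correct and matches the paper exactly: the paper presents this corollary as an immediate consequence of Theorem~\ref{multiplicativelocalizationresult}, Theorem~\ref{additivelocalizationresult}, Proposition~\ref{kpointsareVWpoints}, and Proposition~\ref{cstrictlyimpliesk}, chained in precisely the order you describe. Your bookkeeping of where Dini-smoothness enters (only in Proposition~\ref{kpointsareVWpoints}) and which point-properties each theorem consumes is also accurate.
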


It is well-known that $\mathcal{C}^2$-smooth strongly pseudoconvex boundary points are $\C$-strictly convex boundary points. In particular, Corollary \ref{cstrictlyconvexlocalization} applies to such points. This covers \cite[Proposition 3]{V1}.

Moreover, under global assumptions such as strong pseudoconvexity or convexity and visibility, one may find quantitative strong localization results in \cite{N}. The reader may also consult to \cite{NT} for weaker versions of these results in the case of $\mathbb{C}$-convexifiable finite type boundary points. These results are applied there in order to study the boundary behaviour of the quotient/difference of the Lempert function $l_\O$ and the so-called Carathéodory distance $c_\O$ (recall that $c_\O\leq k_\O \leq l_\O$). Our results can be also applied to such a study.

\section{Proofs}\label{secproofs}

 \textit{Proof of Proposition \ref{prop:wellbehavedandwpt}.}
 (a) $\Rightarrow$ (b). Suppose that \eqref{weakWpointcondition} fails. Then there exist $C>0$ and sequences $z_n,w_n\in\O$ tending to $p\in\partial\O$ with $(z_n|w_n)_o \leq C$. We may take $\epsilon$-geodesics (defined on disjoint intervals) $\sigma^z_n: I^z_n\rightarrow \O,\sigma^w_n:I^w_n\rightarrow\O$ joining $z_n$ to $o$, $w_n$ to $o$ respectively. Let us denote the union of these curves by $\sigma_n$. Explicitly, we set $\sigma_n:I^w_n\cup I^w_n\rightarrow\O$, $\sigma_n(t)=\sigma^z_n(t)$ if $t\in I^z_n$, and $\sigma_n(t)=\sigma^w_n(t)$ otherwise. We claim that $\sigma_n$ are $\epsilon'$-geodesics joining $z_n$ to $w_n$ for some $\epsilon'>0$. To see this note that  $(z_n|w_n)_o \leq C$ implies that $k_\O(z_n,o)+k_\O(w_n,o)\leq k_\O(z_n,w_n) + 2C.$
	Since $\sigma^z_n,\sigma^w_n$ are $\epsilon$-geodesics, $l_\k^\O(\sigma^z_n)\leq  k_\O(z_n,o) + \epsilon$ and  $l_\k^\O(\sigma^w_n)\leq  k_\O(w_n,o) + \epsilon$. Consequently, $$l_\k^\O(\sigma_n)\leq l_\k^\O(\sigma^z_n)+l_\k^\O(\sigma^w_n) \leq   k_\O(z_n,o) + \epsilon +   k_\O(w_n,o) + \epsilon \leq k_\O(z_n,w_n)+ 2C + 2 \epsilon.$$
	Setting $\epsilon'=2C+2\epsilon$ we see that $\sigma_n$ are  $\epsilon'$-geodesics whose image contains $o\in\O$. This contradicts the fact that $p$ has well-behaved $\epsilon'$-geodesics.
	
(b) $\Rightarrow$ (a). Suppose that $p$ does not have well-behaved $\epsilon$-geodesics for some $\epsilon>0$. Then we may take $z_n,w_n$ tending to $p$ and $\epsilon$-geodesics $\gamma_n:I_n\rightarrow\O$ intersecting a compact set $K\subset\subset\O$. Take $s_n \in \gamma_n(I_n)\cap K$. Clearly $k_\O(s_n,o)\leq C' < \infty$, where $C'$ is the diameter of the set $K$ with respect to $k_\O$. Then
	$$ k_\O(z_n,o)+k_\O(w_n,o) \leq k_\O(s_n,z_n)+ k_\O(s_n,w_n) + 2k_\O(s_n,o) \leq l_\kappa(\gamma_n)+ 2C' \leq k_\O(z_n,w_n) + 2C' + \epsilon. $$
	Then we see that $(z_n|w_n)_o \leq C'+\epsilon/2$ so \eqref{weakWpointcondition} fails.\qed
\smallskip

    \textit{Proof of Proposition \ref{kpointsareVWpoints}.} We will first show that $p$ is a $v$-point.
	
	To do so, note that since $\partial \O$ is Dini-smooth in a neighbourhood of $p$, we can choose a small enough neighbourhood $U$ of $p$ so that \eqref{dinismoothestimate} holds whenever $z,w\in \O\cap U$.
	
	Now we suppose the contrary and assume that $\eqref{weakgromovproperty}$ fails. Then there exists $o\in\O$ and $ q \in \partial \O, q\neq p$, such that we can find sequences of points $z_n\to p$, $w_n\to q$
	with $$k_\O(z_n,o)\geq k_\O(z_n,w_n) +n. $$
	
	As $ o\in \O$, there exists $C>0$ and $o'\in\O\cap U$ satisfying $ k_\O(z_n,o)-k_\O(z_n,o') \leq k_\O(o',o) \leq C $ for all $n\in \mathbb{N}$. In particular, this implies that $k_\O(z_n, o')\geq k_\O(z_n,w_n) + n - C$. As $o'\in\O\cap U$ and $\{z_n\}_{n\in\mathbb{N}}\subset\O\cap U$ by \eqref{dinismoothestimate} we have \begin{equation}\label{upperboundforkpoints}\k_\O(z_n,o')\leq \log\left(1+\dfrac{2\|z_n-o'\|}{\sqrt{\delta_\O(z_n)\delta_\O(o')}}\right).
	\end{equation}
	By above we obtain $$k_\O(z_n,w_n)\leq \dfrac{1}{2}\log\left(\dfrac{1}{\delta_\O(z_n)}\right)+C'-n.$$ As $z_n\rightarrow p$ and $w_n\rightarrow q\neq p$ this contradicts with the fact that $p$ is a $k$-point. Hence $p$ must be a $v$-point.
	
	Now, we will show that $p$ is a $w$-point.
	
	Let $U$ be as above and assume that $p$ is not a $w$-point. Then we can find $C>0$, $z_n,w_n\rightarrow p$, $o_n\rightarrow o\in\ov \O $
	with $o\neq p$ such that
	\begin{equation}\label{kpointimplieswpointeqn} k_\O(z_n,o_n)+k_\O(w_n,o_n)-k_\O(z_n,w_n) \leq C.
	\end{equation}
	As $z_n,w_n\rightarrow p$, by passing to the limit we may assume that $\{z_n\}_{n\in \mathbb{N}},\{w_n\}_{n\in\mathbb{N}}\subset \O\cap U$ so by \eqref{kpoint} we obtain $$ k_\O(z_n,w_n)\leq \log\left(1+\dfrac{2\|z_n-w_n\|}{\sqrt{\delta_\O(z_n)\delta_\O(w_n)}}\right) .$$
	Hence by \eqref{kpointimplieswpointeqn} we get
	$$ k_\O(z_n,o_n)+k_\O(w_n,o_n) \leq \log\left(1+\dfrac{2\|z_n-w_n\|}{\sqrt{\delta_\O(z_n)\delta_\O(w_n)}}\right) + C .$$
	$z_n,w_n\rightarrow p$ so $\|z_n-w_n\|\rightarrow 0$. As $o_n\rightarrow o\neq p$, this contradicts the fact that $p$ is a $k$-point. Hence
$p$ must be a $w$-point.\qed
\smallskip

As in \cite{BNT,S,NOT}, the key estimate in the proofs of our localization results will be Royden's localization lemma. Recall that (see for instance \cite[Proposition 13.2.10]{JP}) if $\O$ is a domain in $\Cn$ and $D$ is any subdomain, we have
\begin{equation}\label{roydenslocalizationlemma}
	\tilde{l}_\O(z,\O\setminus D) \k_{D}(z;v)\leq \k_\O(z;v) \: \: \: \: \: \: z\in D,\:\: v\in\Cn.
\end{equation}
As noted earlier, the first statement of the following proposition shows that the limit in \eqref{multiplicativelocalizationequation} is well-defined. Moreover, we will use its second statement in the proof of Theorem \ref{additivelocalizationresult}.

\begin{prop}\label{Wpointsandhyperbolicity} $\:$
	
(a) Suppose that $p\in\partial\O$ is a $w$-point. Then $\O$ is hyperbolic at $p\in\partial \O$.

(b) Suppose that $\O$ is hyperbolic at $p\in\partial\O$. Then $\O$ is hyperbolic near $p\in\partial\O$.
	\end{prop}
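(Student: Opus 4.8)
The plan is to prove the two assertions separately; in each the bounded case is immediate, so the real content is when $\O$ is unbounded.

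For (a), if $\O$ is bounded it is hyperbolic at $p$ by definition, so I assume $\O$ is unbounded and argue by contradiction: if $\O$ is not hyperbolic at $p$, there are sequences $z_n\to p$ and $w_n\to\infty$ with $l_\O(z_n,w_n)\to 0$, hence $k_\O(z_n,w_n)\to 0$. Fix any bounded neighbourhood $U$ of $p$; then $\O\setminus U\neq\emptyset$ since $\O$ is unbounded, and the key observation is that $(z|z)^\O_o=k_\O(z,o)$, so restricting the defining limit of a $w$-point \eqref{Wpointcondition} to the diagonal $z=w$ gives $\lim_{z\to p}k_\O(z,\O\setminus U)=\infty$. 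As $w_n\to\infty$ and $U$ is bounded, $w_n\in\O\setminus U$ for large $n$, so $k_\O(z_n,w_n)\ge k_\O(z_n,\O\setminus U)\to\infty$, contradicting $k_\O(z_n,w_n)\to 0$. Thus $\O$ is hyperbolic at $p$.

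For (b), if $\O$ is bounded then \eqref{hyperbolicitynearp} holds as recorded earlier, so I assume $\O$ is unbounded and hyperbolic at $p$. By \cite[Proposition 2]{NOT} there is a bounded neighbourhood $U_0$ of $p$ for which \eqref{strongerlocalhyperbolicity} holds; I would fix neighbourhoods $V''\subset\subset V'\subset\subset U'\subset\subset U_0$ of $p$, set $c:=k_\O(\O\cap V',\O\setminus U')>0$, and claim that $U:=V''$ witnesses hyperbolicity near $p$. Let $z,w\in\O\cap V''$ with $z\neq w$ and let $\gamma$ be any absolutely continuous curve in $\O$ joining them. If $\gamma$ meets $\O\setminus U'$, then, since $z\in\O\cap V'$, $\lk_\O(\gamma)\ge k_\O(z,\O\setminus U')\ge c$. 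Otherwise $\gamma$ lies in a single connected component $D$ of the bounded open set $\O\cap U'$; since $D$ is bounded, $\k_D(\cdot;v)\ge c''\|v\|$ for a fixed $c''>0$ (comparison with a ball containing $\O\cap U'$), and since any curve in $\O$ from a point of $D$ to a point of $\O\setminus D$ must leave $U'$ and hence meet $\O\setminus U'$, one gets $k_\O(x,\O\setminus D)\ge c$, whence $\tilde{l}_\O(x,\O\setminus D)\ge\tanh c$ for all $x\in D\cap V'$ (using $l_\O\ge k_\O$). Feeding these into Royden's localization lemma \eqref{roydenslocalizationlemma} gives $\k_\O(\gamma(t);\gamma'(t))\ge (\tanh c)\,c''\,\|\gamma'(t)\|$ whenever $\gamma(t)\in V'$; integrating over all of $\gamma$ (if $\gamma\subseteq V'$) or over the portion of $\gamma$ preceding its first exit from $V'$ (otherwise) yields $\lk_\O(\gamma)\ge (\tanh c)\,c''\,\min\{\|z-w\|,\operatorname{dist}(\overline{V''},\partial V')\}$. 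Combining the two cases, $\lk_\O(\gamma)$ is bounded below by a positive quantity independent of $\gamma$, so $k_\O(z,w)>0$.

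The step I expect to be the main obstacle is the second case of (b): when the competing curve stays near $p$, the separation estimate \eqref{strongerlocalhyperbolicity} yields no direct lower bound on $k_\O(z,w)$, so one must pass to the infinitesimal level via Royden's localization lemma and exploit that $\O\cap U'$ is bounded. A minor technical point is that $\O\cap U'$ need not be connected: one applies \eqref{roydenslocalizationlemma} to the component $D$ containing the (connected) curve, and the Lempert-function factor $\tilde{l}_\O(\cdot,\O\setminus D)$ still stays bounded away from zero on $V'$ precisely because leaving $D$ forces leaving $U'$.
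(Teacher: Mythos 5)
Your proof is correct and follows essentially the same route as the paper: part (a) hinges on the diagonal identity $(z|z)^\O_o=k_\O(z,o)$ applied to \eqref{Wpointcondition}, and part (b) is Royden's localization lemma \eqref{roydenslocalizationlemma} combined with the same case split on whether the connecting curve escapes a fixed neighbourhood of $p$. The only cosmetic differences are that you argue (a) by direct contradiction with \eqref{localhyperbolicity} rather than through the equivalent condition \eqref{strongerlocalhyperbolicity}, and in (b) you bound $\k_{D}$ below by a Euclidean comparison rather than invoking positivity of $k_{\O\cap U}$ for bounded domains; both are sound and interchangeable with the paper's steps.
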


\begin{proof} (a) Since $k_\O(z,w)=(z|z)^\O_w$ by \eqref{Wpointcondition} we have $\lim_{z\rightarrow p, w\rightarrow q\neq p} k_\O(z,w)=\infty$. In particular, this shows that for any neighbourhood $U$ of $p$ and any $c\in\mathbb{R}^{+}$, there exists another neighbourhood $V\subset\subset U$ of $p$ such that
$$ k_\O(\O\cap V,\O\setminus U) > c .$$
As \eqref{localhyperbolicity} is equivalent to \eqref{strongerlocalhyperbolicity}, (a) follows.
	
(b) We will consider two cases. Let $V\subset\subset U$ be two neighbourhoods of $p$ such that $U$ is bounded and we have $k_\O(\O\cap V,\O\setminus U) =: c' > 0$. Let $z\neq w\in\O\cap V$ and $\gamma_n:I_n\rightarrow \O$ be $\frac{1}{n}$-geodesics joining $z$ to $w$. Suppose that each $\gamma_n$ remains in $\O\cap V$. Then as $\O\cap U$ is bounded, by construction \eqref{roydenslocalizationlemma} yields that
	$$ k_\O(z,w)+\frac{1}{n}\geq \lk_{\O}(\gamma) \geq \tanh(c')\lk_{\O\cap U}(\gamma)\geq \tanh(c') k_{\O\cap U}(z,w) > 0 .$$
	
	Letting $n\rightarrow \infty$, the claim follows. Now, we consider the other case. We assume that for large $n$, each $\gamma_n$ above leaves $\O\cap V$. Set $t_n:=\inf\{t\in I_n: \gamma_n(t)\in\partial V \cap \O\}$ and set $\gamma'_n=\gamma_n|_{I'_n}$, where $I'_n:=\{t\in I_n: t\leq t_n\}$.
	Then, by \eqref{roydenslocalizationlemma}, we obtain $$ k_\O(z,w)+\frac{1}{n}\geq \lk_{\O}(\gamma_n) \geq \lk_\O(\gamma'_n) \geq \tanh(c')\lk_{\O\cap U}(\gamma'_n)\geq \tanh(c')k_{\O\cap U}(z,\gamma_n(t_n)) .$$
	
	$\O\cap U$ is bounded so $\inf_{n\in\mathbb{N}}k_{\O\cap U}(z,\gamma_n(t_n)) =: c'' > 0$. Letting $n\rightarrow\infty$ we see that also in this case $k_\O(z,w)>0$. Hence, the proposition follows.
\end{proof}

\textit{Proof of Theorem \ref{multiplicativelocalizationresult}.}
We will first show the following:

\textit{Claim 1.} Under the hypothesis of the theorem, for any neighbourhood $U$ of $p$ and any $\epsilon>0$, there exists another neighbourhood $V$ of $p$ such that $$ \tilde l_\O (z,w) > 1-\epsilon, \:\:\:\:\: z\in \O\cap V, \: w \in \O\setminus U .$$

\textit{Proof.} This  follows directly from the first part of the proof of Proposition \ref{Wpointsandhyperbolicity}.

\textit{Claim 2. } Under the hypothesis of the theorem, for any neighbourhood $V$ of $p$,
there exists another neighbourhood $W\subset\subset V$ of $p$ and $\epsilon > 0$ such that for any $z,w\in\O\cap W$ if $\gamma:I\rightarrow\O$ is an $\epsilon$-geodesic joining $z$ to $w$ we have that $\gamma(I)\subset\O\cap V$.

\textit{Proof.} Let $V$ be any neighbourhood of $p$. Since $p$ is a $w$-point, by \eqref{Wpointcondition} we can find a $c > 0$ and a small enough neighbourhood $W\subset\subset V$ of $p$ such that
\begin{equation}\label{claim2lowerbound} k_\O(z,o)+k_\O(w,o) \geq k_\O(z,w)+c, \:\:\:\:\: z,w\in\O\cap W, \:\: o\in\O\setminus V .
\end{equation}

Taking $z,w\in \O\cap V$ this immediately proves Claim 2 for $\epsilon\leq c$.

We continue with the proof of Theorem \ref{multiplicativelocalizationresult}.
Let $U$ be any neighbourhood of $p$.

By Claim 1, for any $\epsilon>0$, we can find a neighbourhood $V$ of $p$ such that we have
$$ \tilde{l}_\O(\O\cap V,\O\setminus U) > 1-\epsilon.$$

Let $V$ be as above, by Claim 2, we can choose another neighbourhood $W\subset\subset V$ such that whenever $z,w\in\O\cap V$ and $\gamma_n:I_n\rightarrow \O$ are $\epsilon_n$-geodesics joining $z$ to $w$ with $\epsilon_n\rightarrow 0$ we must have $\gamma_n(I_n)\subset\O\cap V$ for large enough $n$. Furthermore, by Proposition \ref{Wpointsandhyperbolicity} we can take $W$ small enough so that whenever $z\neq w\in \O\cap W$ we have $k_\O(z,w)>0$. Thus, by applying \eqref{roydenslocalizationlemma} for large enough $n$ we get
$$ k_\O(z,w)+\epsilon_n\geq\lk_{\O}(\gamma_n)\geq (1-\epsilon)\lk_{\O\cap U}(\gamma_n)\geq (1-\epsilon)k_{\O\cap U}(z,w) .$$ Letting $\epsilon_n\rightarrow 0$ we obtain $$k_\O(z,w)\geq(1-\epsilon)k_{\O\cap U}(z,w) .$$ As $\epsilon>0$ and $z,w\in \O\cap W$ were arbitrary, Theorem \ref{multiplicativelocalizationresult} follows.\qed
\smallskip

To prove Theorem \ref{additivelocalizationresult}, we will follow the proof of \cite[Lemma 3.7]{S} (see also the proof of \cite[Theorem 10]{NOT}). By carefully keeping track of the estimates given in the aforementioned proofs, the $w$-point condition will allow us to provide a strong additive localization result.
\smallskip

\textit{Proof of Theorem \ref{additivelocalizationresult}.}
We fix a neighbourhood $U$ of $p$ such that $\O\setminus U\neq \emptyset$. (otherwise there is nothing to prove) Since $p$ is a $w$-point by \eqref{Wpointcondition} we can find neighbourhoods of $p$, say $V_{n}\subset\subset U$, such that
\begin{equation}\label{applicationofWlastproof}
	k_\O(z,o)+k_\O(w,o)-k_\O(z,w)>n,\quad z,w\in \O\cap V_n,\: o \in \O\setminus U.
\end{equation}

Notice that the conditions of Theorem \ref{additivelocalizationresult} is stronger than the conditions of Theorem \ref{multiplicativelocalizationresult}. So, the Claim 2 given in the proof of Theorem \ref{multiplicativelocalizationresult} remains true. Therefore by Claim 2, for each $V_n$ we can find another neighbourhood $W_n\subset\subset V_n$ of $p$ such that whenever $z,w\in \O\cap W_n$ and $\gamma:I\rightarrow\O$ is a $\epsilon_n$-geodesic joining $z$ to $w$ we have $\gamma(I)\subset \O\cap V_n$. Let $z_n,w_n\rightarrow p$ be arbitrary. By passing to a subsequence if necessary we assume that $z_n,w_n\in \O\cap W_n$. Choose a sequence of $\epsilon_n$-geodesics as above with $\epsilon_n\rightarrow 0$, say $\gamma_n:I_n\rightarrow \O$, joining $z_n$ to $w_n$. By construction, we can assume that $\gamma_n(I_n)\subset\O\cap V_n$.

As $p$ is a $w$-point Proposition \ref{Wpointsandhyperbolicity} shows that $\O$ is hyperbolic at $p$. Using the important estimate given in \cite[Lemma 3.1]{S} we get that $$ k_{\O\cap U}(z_n,w_n) \leq \lk_{\O\cap U}(\gamma_n) = \int_{I_n} \kappa_{\O\cap U}(\gamma(t);\gamma'(t))dt \leq \int_{I_n}(1+Ce^{-k_\O(\gamma_n(t),\O\setminus U)})\k_\O(\gamma_n(t);\gamma'_n(t)))dt  .$$

Since each $\gamma_n$ is an $\epsilon_n$-geodesic, by the above we get
$$
E_n:=k_{\O\cap U}(z_n,w_n)-k_\O(z_n,w_n)\leq C\int_{I_n} e^{-k_\O(\gamma_n(t),\O\setminus V)} \k_\O(\gamma_n(t);\gamma'_n(t))dt - \epsilon_n.
$$

We will show that $E_n\rightarrow 0$. To do so, we note that since $\O$ is hyperbolic at $p$ by Proposition \ref{Wpointsandhyperbolicity} it is hyperbolic near $p$. By passing to the limit if necessary, \cite[Lemma 6.10]{BNT} (see also \cite[Proposition 4.4]{BZ1}) allows us to parametrize each $\gamma_n$ so that
$$ k_\O(\gamma_n(t_1),\gamma_n (t_2))\leq |t_1-t_2| = \lk_{\O}(\gamma|_{[t_1,t_2]}) \leq k_\O(\gamma_n(t_1),\gamma_n(t_2))+\epsilon_n, \: \: \: \: \: t_1 \leq t_2 \in I_n $$ and $\k_\O(\gamma_n(t);\gamma'_n(t))=1$ almost everywhere on $I_n$.

With these parameterizations we set $$t_n:=\inf\{t\in I_n: \: \forall s\in I_n, \: k_\O(\gamma_n(t),\O\setminus U)\leq k_\O(\gamma_n(s),\O\setminus U) \}.$$

Let $o\in \O$ be arbitrary, since $p$ is a $v$-point we can apply the arguments given in the
proof of \cite[Theorem 10]{NOT} to get
$$k_\O(\gamma_n(t),\O\setminus V) \geq \frac{1}{2}\left(\k_\O(\gamma_n(t),o)+k_\O(\gamma_n(t_n),o)\right) - C' \geq \dfrac{1}{2}|t-t_n|+\frac{n}{2}- C'' $$ so
$$ E_n\leq C''' e ^{-\frac{n}{2}}\int_{I_n} e^{-\frac{|t-t_n|}{2}} dt - \epsilon_n.$$

Letting $n\rightarrow \infty$, $E_n\rightarrow 0$. As $\{z_n\}_{n\in\mathbb{N}},\{w_n\}_{n\in\mathbb{N}}$ was arbitrary, Theorem
\ref{additivelocalizationresult} follows.\qed

\begin{rem}
The conditions given in Theorem \ref{multiplicativelocalizationresult} and Theorem \ref{additivelocalizationresult} can be localized.
\end{rem}

To see that the $v$-point condition can be localized, suppose that $p\in\partial \O$ is a $w$-point of $\O$ and $v$-point of $\O\cap U$, where $U$ is a bounded neighbourhood of $p$. By Proposition \ref{Wpointsandhyperbolicity}, $\O$ is hyperbolic at $p$.
Therefore due to the discussion after its proof \cite[Theorem 9]{NOT} applies to the pair $k_\O,k_{\O\cap U}$. Then one may conclude that $p$ is indeed a $v$-point of $\O$. This observation extends Theorem \ref{additivelocalizationresult} to the aforementioned case.

On the other hand, it is not clear that being a local $w$-point implies hyperbolicity at $p$. Thus to use \cite[Theorem 9]{NOT} for localizing the $w$-point condition one needs to assume hyperbolicity of $\O$ at $p$.
\medskip

\noindent{\bf Acknowledgements.} The authors are grateful to Pascal J. Thomas for useful discussions. The authors also thank to the referee for valuable comments.

\end{document}